\newtheorem{thm}{Theorem}
\newtheorem{lem}{Lemma}
\newtheorem{cor}{Corollary}
\theoremstyle{definition}
\newtheorem{defn}{Definition}
\theoremstyle{remark}
\newtheorem{remk}{Remark}
\newcommand{\1}{1\!\!\!\!\!\;{\rm I}}
\newcommand{\mbR}{{\mathbb R}}
\newcommand{\tp}{\overset{P}{\to}}
\newcommand{\sign}{\mathop{\rm sgn}}
\newcommand{\ve}{\varepsilon}
\newcommand{\be}{\begin{equation}}
\newcommand{\ee}{\end{equation}}
\begin{document}
\title{On a Brownian motion with a hard membrane }

\author{Vidyadhar Mandrekar\footnote{Michigan State University, East Lansing, USA
\newline e-mail: mandrekar@stt.msu.edu} \ \ and \ \
Andrey Pilipenko\footnote{Institute of Mathematics, National
Academy of Sciences of Ukraine, Kyiv, Ukraine,
\newline e-mail: pilipenko.ay@yandex.ua} }

\maketitle
\begin{abstract}
Local perturbations of a Brownian motion are considered. As a limit we obtain a non-Markov process that behaves as a reflecting Brownian motion on the positive half line until its local time at zero reaches some exponential level, then changes a sign and behaves as a reflecting Brownian motion on the negative half line until some stopping time,  etc.

AMS Subject Classification: 60F17, 60J50, 60J55

Keywords: reflecting Brownian motion; local time; invariance principle

\end{abstract}


\section{Introduction}
Consider a sequence of  SDEs
\be\label{eq_main}
dX_\ve(t)=a_\ve(X_{\ve}(t))dt +dw(t),\ t\geq 0,\\
X_{\ve}(0)=x,
\ee
where $w$ is a Wiener process.

We assume that  $a_\ve$ is an integrable function; this ensures existence and  uniqueness of a strong solution to this SDE \cite{EngelbertSchmidt}, Theorem 4.53.

We also will suppose  that the support of $a_\ve$ is contained in $[-\ve, \ve]$; $X_\ve $ will be interpreted as a local perturbation of a Brownian motion.

Condition $\mbox{ supp} (a_\ve) \subset[-\ve,\ve] $ ensures the weak relative compactness in the space of continuous functions of  $\{X_{\ve_n}\}$ for any sequence
   $\ve_n\to 0$ as $n\to\infty.$ { Indeed, if $\mbox{ supp} (a_\ve) \subset[-\ve,\ve] $, then it can be seen that $\omega_{X_\ve}(\delta)\leq 2\omega_{w}(\delta)+2\ve$ for any $\ve>0, \delta>0$, where $\omega_{g}(\delta)=\sup_{s,t\in[0,T]; |s-t|\leq\delta}|g(t)-g(s)|$ is the modulus of continuity (see Lemma 3 below).} The  aim of this paper is to discuss  possible  limits of   $\{X_{\ve }\}$  as $\ve\to 0+$.

If $a_\ve(x)=\ve^{-1}a(\ve^{-1}x),$ then $a_\ve$ converges in the generalized sense to $\alpha \delta,$ where $\alpha=\int_\mbR a(x)dx$ and $\delta$ is
the Dirac delta function at $0.$ In this case \cite[Theorem 8]{Portenko}, \cite[Proposition 8]{Lejay} we have convergence in distribution in the space of continuous functions
$$
X_\ve\Rightarrow w_\gamma,
$$
where $\gamma=\tanh \alpha,$ $w_\gamma$ is a skew Brownian motion, i.e., a continuous Markov process with transition density
$$
p_t(x,y) = \varphi_t (x-y) + \gamma\sign(y)\varphi_t (|x|+|y|),\
x,y \in \mathbb{R},
$$
  $\varphi_t(x) = \frac{1}{\sqrt{2\pi t}} e^{-x^2/2t}$ is the density of the normal distribution  $N(0,t)$.

If $\gamma=1$ (or $\gamma=-1$), then $w_\gamma$ is a  Brownian motion with reflection into the positive (or negative) half-line.

Assume that $\sign( x) a_\ve(x)\geq 0.$ Then the drift term pushes up when $X_\ve$ is on the positive half line and pushes down when
$X_\ve$ is on the negative half line. If the limit of sequence $\1_{x\geq 0}a_\ve(x)$ as $\ve\to 0+$ is ``greater than'' delta function, then any limit of $X_\ve$ cannot cross through  zero and consequently the limit will be a reflecting Brownian motion. 

Note that the skew Brownian  with $|\gamma|<1$ has both positive and negative excursions in any neighborhood of hitting 0 with probability 1.
Reflected Brownian motion ($ \gamma =1$) does not cross zero if it starts from $x\geq 0$; otherwise if $x<0$, then  it crosses 0 immediately after the hitting.
We find a situation when a limit of $\{X_\ve\}$  is an intermediate regime between a reflecting case and a skew Brownian motion.
The limit process will be a reflecting Brownian motion in some half line until its local time reaches an exponential random variable. Then it behaves as a Brownian motion with reflection into another half line until its local time reaches another independent exponential random variable, etc. We call such process a Brownian motion with a hard membrane.  
The corresponding definitions are given in \S\ref{Sect1}. We prove the general convergence result in \S  \ref{Sect2}.
As an example we discuss  in \S
\ref{Sect3} the case $a_\ve(x)=L_\ve \ve^{-1}a(\ve^{-1} x),\  \mbox{supp} (a) \subset [-1,1]$,where $L_\ve\to\infty$ as $\ve\to 0+$.

{  The Brownian motion with a hard membrane can be also obtained as a scaling limit of the Lorentz process in a strip with a reflecting wall at the origin that has small shrinking  holes \cite{NandoriSzasz}.}

\section{Definitions. Reflecting Brownian motion. Brownian motion with a hard membrane. }\label{Sect1}

Recall the definition and properties of the Skorokhod reflection problem, see for ex. \cite{Pilipenko_RSDE}.
\begin{defn}
\label{defn1}
Let  $f\in C([0, T]), f(0)\geq0$. A pair of continuous functions $g$ and $l$ is said to be a solution of the Skorokhod problem for $f$ if

{\bf S1.} $g(t)\geq0, \ t\in[0, T];$

{\bf S2.} $g(t)=f(t)+l(t), \ t\in[0, T];$

{\bf S3.} $l(0)=0, \ l$ is non-decreasing;

{\bf S4.} $\int\limits^T_0\1_{g(s)>0}dl(s)=0.$
\end{defn}
It is well known that there exists a unique solution to the Skorokhod problem and the solution is given by the formula
\begin{equation}
\label{eq2.1}
l(t)=-\min_{s\in[0, t]}(f(s)\wedge0)=\max_{s\in[0, t]}(-f(s)\vee0),
\end{equation}
\begin{equation}
\label{eq2.2}
g(t)=f(t)+l(t)=f(t)-\min_{s\in[0, t]}(f(s)\wedge0).
\end{equation}

We  will say that $g$ is a process  reflecting into a positive half line and denote it by $f^{refl, +}.$
\begin{remk}\label{remSkor}
If $f(0)<0,$ then
we set by definition  $f^{refl, +}(t):=f(t)$ until the instant $\zeta_0$ of hitting
 $0,$ and $f^{refl, +}(t):=f(t)-\min_{s\in[\zeta_0, t]} f(s)$ for $t\geq \zeta_0.$
 \end{remk}
 The reflection problem with reflection into the negative half line is constructed similarly, $g(t):=f(t)-l(t),$ where $l$ is also non-decreasing.
In this case denote $g$ by $f^{refl, -}.$

Let $w(t), t\geq 0$ be a Brownian motion started at $x$, $w^{refl,+}$ and $w^{refl, -}$ be reflecting Brownian motions with reflection at $0$ into the positive and negative half lines, correspondingly. It is known that the process $l(t)$ is the two-sided local time  of $w^{refl, \pm}$ at   $0$ defined by\\
$\lim_{\ve\to 0+}(2\ve)^{-1}\int_0^t\1_{[-\ve,\ve]}(w^{refl, \pm}(s))ds.$

Consider two sequences of exponential random variables $\{\xi_k^+\}$ and  $\{\xi_k^-\}$ with parameters $\alpha^\pm$, respectively. Suppose that all random variables  $\{\xi_k^\pm\}$ and the Brownian motion $w(t), t\geq 0$ are mutually independent.

Assume that $w(0)=x>0.$ The Brownian motion with a hard membrane and parameters of permeability $\alpha^\pm$ is constructed in the following way.
$$
w^{hard}(t):=w^{refl, +}(t)\ \mbox{if } l(t)\leq {\xi}^+_1.
$$
At the moment $\zeta^+_1:=\inf\{t\geq 0\ | \ l(t){\geq}  {\xi}^+_1\}$   the process $w^{hard}$ changes orientation. It  reflects into negative half line until
the moment when its local time after $\zeta_1^+$ reaches the level
$ {\xi}_1^-$:
$$
w^{hard}(t):= {w(t)-w(\zeta_1^+)-(l(t)-l(\zeta_1^+))}=w(t)-w(\zeta_1^+)-\max_{s\in[\zeta_1^+, t]}( w(s) -w(\zeta_1^+))
$$
 for $t\leq \inf\{s\geq \zeta_1^+\ : \ (l(s)-l(\zeta_1^+))\geq \xi_1^-\}= \inf\{t\geq \zeta_1^+\ : \ \max_{s\in[\zeta_1^+, t]}( w(s) -w(\zeta_1^+))\geq \xi_1^-\}.$

Denote $\zeta^-_1:=\inf\{t\geq \zeta_1^+\ : \max_{s\in[\zeta_1^+, t]}( w(s) -w(\zeta_1^+))\geq \xi_1^-\}$. At the moment $\zeta^-_1$ changes its orientation again and reflects into the positive half line, and so on.
The formal equation for $w^{hard}$ is the following
$$
w^{hard}(t)= w(t)+\int_0^t \left(\1_{l(s)\in \cup [\xi_1^++\xi_1^-+...+\xi_k^++\xi_k^-, \xi_1^++\xi_1^-+...+\xi_k^++\xi_k^-  +\xi_{k+1}^+)}- \right.
$$
$$
\left.- \1_{l(s)\in \cup [\xi_1^++\xi_1^-+...+\xi_k^+ , \xi_1^++\xi_1^-+...+\xi_k^++\xi_k^- )}\right)dl( {s}).
$$
If $x=w(0)<0,$ then  process $w^{hard}(t), t\geq 0$ is constructed similarly.

 In case $w(0)=0,$   we have to ``attach'' the initial direction of reflection at zero.
Denote the direction of reflection of $w^{hard}$ at time $t$ by $\sign w^{hard}(t)\in\{-1, 1\}.$ Note that
$\sign w^{hard}(t)=1$ if $ w^{hard}(t)>0$ and $\sign w^{hard}(t)=-1$ if $ w^{hard}(t)<0$; $\sign w^{hard}(t)$ may change a sign only at instants
$\xi_1^++\xi_1^-+...+\xi_k^++\xi_k^-$ or $\xi_1^++\xi_1^-+...+\xi_k^+.$ We will always select a cadlag
modification for $\sign w^{hard}(t), t\geq 0  $. It can be seen that the pair $(w^{hard}(t),\sign w^{hard}(t))$ is a strong Markov process on $\mbR\times \{-1, 1\}.$ We can informally consider $w^{hard}(t), \ t\geq 0$ as the strong Markov process on the set $(-\infty, 0-]\cup [0+,\infty),$ where $0\pm$ means direction of the reflection at when $w(0)=0$.

\section{General conditions of convergence}\label{Sect2}
We assume that $\mbox{supp} a_\ve\subset  {[-\ve,\ve]}, \ \sign(x) a_\ve(x)\geq 0,\  {a_\ve}\in L_1(\mbR).$

Let us suppose for simplicity that $X_\ve(0)=x>0.$
 Introduce a sequence of stopping times
 $$
 \sigma^{(\ve)}_0:=0;
 $$
$$
 \tau^{(\ve)}_{n+1}:=\inf\{t\geq \sigma^{(\ve)}_{n}\ : \ X_{\ve}(t)=\ve\},
\ n\geq 0;
 $$
$$
 \sigma^{(\ve)}_{n }:=\inf\{t\geq \tau^{(\ve)}_{n}\ : \ X_{\ve}(t)=2\ve\},
\ n\geq 1.
 $$
 \begin{remk}
 For any $n\geq 1$ moments $\sigma^{(\ve)}_{n }, \tau^{(\ve)}_{n }$ are finite a.s., and
 $$
 \lim_{n\to\infty}\sigma^{(\ve)}_{n }= \lim_{n\to\infty} \tau^{(\ve)}_{n }=\infty \ \ \mbox{a.s.}
 $$
 \end{remk}
 Set
 $$
A_{\ve} (t):=\int_0^t\1_{ {s}\in \cup_k[ \sigma^{(\ve)}_{k }, \tau^{(\ve)}_{k +1}]}ds,
 $$
 $$
 A_\ve^{(-1)}(t): =\inf\{ s\geq 0\ :\ A_{\ve}(s)>t\},
 $$
 $$
 \bar X_\ve(t):=X_\ve(  A_\ve^{(-1)}(t)),\ \  {\bar w_\ve(t):= w(0)+ \int_0^{A_\ve^{(-1)}(t)} \1_{s\in \cup_k[ \sigma^{(\ve)}_{k }, \tau^{(\ve)}_{k +1}]}dw(s)}.
 $$
 {Observe that $\bar w_\ve(t), t\geq 0$ is a Wiener process.
}
 {
\begin{remk}
We will always assume that if $X(0)=x,$ then  $\{w(t)\}$ is a Wiener process also started from   $x.$
\end{remk}
}

{Let $\bar n_\ve(t)$ be the number of times when the process $\bar X_\ve(s), s\in [0,t]$ hits $\ve$,
$$
\bar n_\ve(t)=|\{s\in [0,t]\ : \ \bar X_\ve(s-)=\ve\}|.
$$
} 
  Informally, the behavior of $\bar X_\ve$ is the following. It moves as a Brownian motion until it hits $\ve.$ Next $\bar X_\ve$ immediately jumps to $2\ve$
  and moves as a Brownian motion (a  {vertical shift of}  $\bar w_\ve$) again until the second hitting $\ve, $ then jumps to $2\ve,$ etc.

\begin{figure}[h]
    \caption{}
          \includegraphics[width=15 cm]{picture.eps}
\end{figure}


 {
 \begin{lem}
 Assume that $X_\ve(0)=x,$  where $x>\ve$.
Then
\be\label{eq_X_ve}
\bar X_\ve(t)= {\bar w_\ve(t)}+\ve \bar n_\ve(t),
\ee
and
\be\label{eq_n_ve}
\bar n_\ve(t):=\left[- {\ve^{-1}}\min_{s\in[0,t]}\left( (\bar w_\ve(s)-2\ve)\wedge 0\right)\right]
\ee
where $[\, \cdot \;]$ is the integer part of a number.
\end{lem}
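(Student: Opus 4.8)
The plan is to split the argument into the additive representation \eqref{eq_X_ve} and the explicit count \eqref{eq_n_ve}, both of which I will read off from the pathwise jump description of $\bar X_\ve$ given just above the lemma.

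First I would establish \eqref{eq_X_ve}. By construction the time change $A_\ve^{(-1)}$ deletes exactly the excursions of $X_\ve$ on the intervals $[\tau_k^{(\ve)},\sigma_k^{(\ve)}]$, where $X_\ve$ travels from $\ve$ up to $2\ve$; on the retained intervals $[\sigma_k^{(\ve)},\tau_{k+1}^{(\ve)}]$ one has $X_\ve\geq\ve$, hence $a_\ve(X_\ve)=0$ because $\mathrm{supp}\,a_\ve\subset[-\ve,\ve]$, so $dX_\ve=dw$ there. Consequently $\bar X_\ve$ and $\bar w_\ve$ have identical increments between consecutive hits of $\ve$, while at each such hit $\bar X_\ve$ jumps from $\ve$ to $2\ve$, i.e.\ by $+\ve$, and $\bar w_\ve$ stays continuous (the collapsed interval is excluded by the indicator in its definition). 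Since $\bar X_\ve(0)=\bar w_\ve(0)=x$ and $\bar n_\ve(0)=0$, the difference $\bar X_\ve-\bar w_\ve$ is constant off the jump times and increases by $\ve$ at each of the $\bar n_\ve(t)$ jumps in $[0,t]$, which is \eqref{eq_X_ve}.

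Next I would compute $\bar n_\ve(t)$. Substituting \eqref{eq_X_ve}, on the interval following the $(k-1)$-th jump (or, for $k=1$, from the start at $x$) one has $\bar X_\ve=\bar w_\ve+(k-1)\ve$, so that $\bar X_\ve$ leaves that interval at the value $2\ve$ and the $k$-th jump is triggered at the first later time when $\bar X_\ve$ returns to $\ve$, i.e.\ when $\bar w_\ve=(2-k)\ve$. An induction on $k$ (using that $\bar w_\ve$ is continuous, so $\bar X_\ve\geq\ve$ holds throughout each inter-jump interval by the intermediate value theorem) shows that the $k$-th jump occurs exactly when $\bar w_\ve$ first attains the level $(2-k)\ve$. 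Because these levels $\ve,0,-\ve,\dots$ are equally spaced and strictly decreasing, the jumps are ordered and correspond precisely to the instants at which the running minimum $m(t):=\min_{s\in[0,t]}\bar w_\ve(s)$ passes through them; hence $\bar n_\ve(t)=\#\{k\geq1:\ (2-k)\ve\geq m(t)\}$.

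Finally I would turn this count into the stated closed form. The displayed cardinality equals $\lfloor(2-m(t)/\ve)\vee0\rfloor$, the truncation at $0$ encoding that no jump has occurred while $m(t)>\ve$. Since $\min_{s\in[0,t]}\big((\bar w_\ve(s)-2\ve)\wedge0\big)=(m(t)-2\ve)\wedge0$, one has $-\ve^{-1}\min_{s\in[0,t]}((\bar w_\ve(s)-2\ve)\wedge0)=(2-m(t)/\ve)\vee0$, so taking integer parts gives \eqref{eq_n_ve}. I expect the only real care to be needed in the inductive identification of the jump levels and in checking the boundary behaviour of the integer part --- namely that the $\wedge0$ inside the minimum is exactly what discards the spurious negative counts when $m(t)$ still exceeds $\ve$ and the process has not yet jumped.
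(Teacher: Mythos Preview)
Your proof is correct and follows essentially the same route as the paper: both derive \eqref{eq_X_ve} directly from the definitions and then identify the $k$-th jump time of $\bar X_\ve$ as the first hitting time of level $(2-k)\ve$ by $\bar w_\ve$, from which \eqref{eq_n_ve} follows. The paper simply records this identification and states that it yields the formula, whereas you additionally spell out the algebra converting the level-crossing count into the integer-part expression; this extra detail is harmless and makes the boundary case (no jump while $m(t)>\ve$) explicit.
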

\begin{proof}
Formula \eqref{eq_X_ve} follows from the definition of $\bar X_\ve(t),$ $ \bar w_\ve(t),$ and $\bar n_\ve(t).$
Let $\bar \tau^{(\ve)}_k$ be the instant of  $k$-th jump of $\bar X_\ve.$  Observe that
\be\label{eq_tauk}
\bar \tau^{(\ve)}_k=\inf\{t\geq 0\ |\ \bar n_\ve(t)=k\}= \inf\{t\geq 0\ | \ \bar w_\ve(t) = (2-k)\ve\}, \ k\geq 1.
\ee
and
\be\label{eq_tauk1}
\bar n_\ve(t)=k, t\in [\bar \tau^{(\ve)}_k,\bar \tau^{(\ve)}_{k+1}).
\ee
Formulas \eqref{eq_tauk}, \eqref{eq_tauk1} imply \eqref{eq_n_ve}.
\end{proof}
}

Set
$$
\zeta_\ve:=\inf\{ t\geq 0:\ \ X_{\ve}(t)=-\ve\},
$$
 $$
 \bar G_\ve=\inf\left\{ k\geq 1\ :\ \sigma^{(\ve)}_k> \zeta_\ve\right\}.
 $$

Define

 $$\bar \zeta_\ve=A_{\ve} (\zeta_\ve) =\int_0^{\zeta_\ve} \1_{s\in \cup_k[ \sigma^{(\ve)}_{k }, \tau^{(\ve)}_{k +1}]}ds=\sum_{k=0}^{\bar G_\ve-1}(\tau^{(\ve)}_{k+1}-\sigma^{(\ve)}_k).$$

  Set
\be\label{eq_p+}
  p_\ve^+= P\Big( \inf\{ t\geq 0: \ X_\ve(t)=-\ve\}<\inf\{t\geq 0:\ X_\ve(t)=2\ve\} \  | \ X_\ve(0)=\ve \Big);
\ee
  $p_\ve^+$
is the probability for the process $X_\ve $  to reach $-\ve$ before $2\ve$ given $ X_\ve(0)=\ve.$

 \begin{lem}\label{lem_equal}
 Assume that $X_\ve(0)=x,$  where $x>\ve$.

Set
$$
\tilde X_\ve(t)=  {w(t)}+\ve \tilde n_\ve(t),
$$
where
$$
\tilde n_\ve(t):=\left[- {\ve^{-1}}\min_{s\in[0,t]}\left( (w(s)-2\ve)\wedge 0\right)\right].
$$

Let $\tilde G_\ve^+$ be a geometrical random variable with parameter $p_\ve^+,$
 $$
 P(\tilde G^+_\ve=n)=(1-p_\ve^+)^{n-1}p_\ve^+, \ n\geq 1;
 $$
 and $\tilde G^+_\ve$  is independent of a Wiener process
 $w(t), t\geq 0.$

 Denote $\tilde \zeta_\ve:=\inf\{t\geq 0:\ \tilde n_\ve(t)\geq \tilde G_\ve^+\}.$

 Then the distributions   in $D([0,\infty))^2\times C([0,\infty))\times \mbR$ of quadruples
 \newline
 $
(\bar X_\ve(\cdot), \bar n_\ve(\cdot),  {\bar w_\ve(\cdot)}, \bar \zeta_\ve)
$
 and
 $(\tilde X_\ve(\cdot), \tilde n_\ve(\cdot), w(\cdot), \tilde \zeta_\ve)$
   are equal.
 \end{lem}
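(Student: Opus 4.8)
The plan is to reduce the equality of the two quadruples to a single distributional identity for the pair $(\bar w_\ve,\bar G_\ve)$ and then to observe that both quadruples are obtained from their respective pairs by one and the same measurable map. Indeed, by Lemma 1 the coordinates $\bar X_\ve,\bar n_\ve$ are deterministic functionals of the Wiener process $\bar w_\ve$ alone, given by exactly the formulas \eqref{eq_X_ve}--\eqref{eq_n_ve} that define $\tilde X_\ve,\tilde n_\ve$ from $w$. So it only remains to treat the last coordinate and to produce the correct joint law. I would first show, by pure bookkeeping, that
$$\bar\zeta_\ve=\bar\tau^{(\ve)}_{\bar G_\ve}=\inf\{t\ge0:\bar n_\ve(t)\ge\bar G_\ve\},$$
which matches the definition of $\tilde\zeta_\ve$ with $\bar G_\ve$ in place of $\tilde G^+_\ve$. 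Since the process cannot reach $-\ve$ during a kept interval (where it stays above $\ve$), the hitting time $\zeta_\ve$ falls in a removed interval $[\tau^{(\ve)}_{\bar G_\ve},\sigma^{(\ve)}_{\bar G_\ve})$, on which $A_\ve$ is constant; hence $\bar\zeta_\ve=A_\ve(\tau^{(\ve)}_{\bar G_\ve})=\sum_{k=0}^{\bar G_\ve-1}(\tau^{(\ve)}_{k+1}-\sigma^{(\ve)}_k)=\bar\tau^{(\ve)}_{\bar G_\ve}$, and \eqref{eq_tauk}--\eqref{eq_tauk1} identify this with the first time $\bar n_\ve$ reaches $\bar G_\ve$.

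The core of the argument is the claim that $\bar G_\ve$ is geometric with parameter $p^+_\ve$ and is independent of $\bar w_\ve$. To see this I would decompose the trajectory of $X_\ve$ at the stopping times into the ``down-segments'' $D_k$ (the path on $[\sigma^{(\ve)}_{k-1},\tau^{(\ve)}_k]$, along which $X_\ve>\ve$ so that $a_\ve\equiv0$ and $dX_\ve=dw$) and the ``up-segments'' $U_k$ (the path on $[\tau^{(\ve)}_k,\sigma^{(\ve)}_k]$). By the strong Markov property of the strong solution $X_\ve$, applied successively at these times whose endpoint values are the deterministic levels $\ve$ and $2\ve$, the segments $D_1,U_1,D_2,U_2,\dots$ are mutually independent and the $U_k$ are i.i.d.\ (each distributed as $X_\ve$ started at $\ve$ and run until it hits $2\ve$). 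Now $\bar w_\ve$ is built solely from the kept intervals, hence is a measurable functional of $\{D_k\}$, whereas, setting $\chi_k:=\1\{U_k\text{ reaches }-\ve\text{ before }2\ve\}$, the bookkeeping above shows $\bar G_\ve=\inf\{k\ge1:\chi_k=1\}$, a functional of $\{U_k\}$. Since $\{D_k\}$ and $\{U_k\}$ are independent and the $\chi_k$ are i.i.d.\ Bernoulli with $P(\chi_1=1)=p^+_\ve$ by the very definition \eqref{eq_p+}, it follows that $\bar G_\ve$ is geometric($p^+_\ve$) and independent of $\bar w_\ve$.

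Finally I would assemble the pieces. Since $\bar w_\ve$ is a Wiener process started at $x$, the pair $(\bar w_\ve,\bar G_\ve)$ has the same law as $(w,\tilde G^+_\ve)$: both consist of a Wiener process from $x$ together with an independent geometric($p^+_\ve$) variable. Applying to both pairs the common measurable map $(\omega,G)\mapsto(\omega+\ve\, n[\omega],\,n[\omega],\,\omega,\,\inf\{t:n[\omega](t)\ge G\})$, where $n[\omega](t)=[-\ve^{-1}\min_{s\le t}((\omega(s)-2\ve)\wedge0)]$, yields the asserted equality of the two quadruples in $D([0,\infty))^2\times C([0,\infty))\times\mbR$. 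The step I expect to require the most care is the rigorous justification of the strong-Markov decomposition: one must argue that the up-segments are genuinely i.i.d.\ and jointly independent of the down-segments, and that the event $\{\chi_k=1\}$ is measurable with respect to $U_k$ only, so that the permeability probability $p^+_\ve$ of \eqref{eq_p+} is indeed the common success parameter.
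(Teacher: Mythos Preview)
Your proposal is correct and follows essentially the same approach as the paper's proof: both reduce the claim to showing that $\bar G_\ve$ is geometric with parameter $p_\ve^+$ and independent of $\bar w_\ve$, and then invoke Lemma~1 so that the whole quadruple is a fixed measurable functional of the pair $(\bar w_\ve,\bar G_\ve)$. The only difference is cosmetic: the paper phrases the independence through $\sigma$-algebra equalities (identifying $\sigma(\bar w_\ve)$ with the $\sigma$-algebra generated by the Brownian increments on the intervals $[\sigma^{(\ve)}_k,\tau^{(\ve)}_{k+1}]$, and $\sigma(\bar G_\ve)$ with that generated by the increments on $[\tau^{(\ve)}_k,\sigma^{(\ve)}_k]$), whereas you phrase it via an explicit strong-Markov segment decomposition into the $D_k$ and $U_k$; these are two wordings of the same iterated strong-Markov argument.
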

 
\begin{proof}
Observe that the following $\sigma$-algebras are equal
$$
\sigma(\bar X_\ve(t), \bar n_\ve(t), \bar w_\ve(t), t\geq 0)= \sigma(  {\bar w_\ve(t)}, t\geq 0)= 
$$
$$
\sigma \left( \int_0^{A_\ve^{(-1)}(t)} \1_{s\in \cup_k[ \sigma^{(\ve)}_{k }, \tau^{(\ve)}_{k+1 }]}dw(s), t\geq 0\right) 
=\sigma \left( \int_0^{t} \1_{s\in \cup_k[ \sigma^{(\ve)}_{k }, \tau^{(\ve)}_{k+1 }]}dw(s), t\geq 0\right)=
$$
$$
=\sigma \left( w(t)-w(\sigma^{(\ve)}_{k }), t\in  [ \sigma^{(\ve)}_{k }, \tau^{(\ve)}_{k+1}], k\geq 0\right).
$$
 Random variable $\bar G^+_\ve$ is measurable with respect to 
$$
\sigma\left(X_\ve(t), t\in \cup_k[ \tau^{(\ve)}_{k }, \sigma^{(\ve)}_{k }]\right)=\sigma\left(X_\ve(t)-\ve, t\in \cup_k[ \tau^{(\ve)}_{k }, \sigma^{(\ve)}_{k }]\right)=
$$
$$
\sigma\left(X_\ve(t)-X_\ve(\tau^{(\ve)}_{k }), t\in [ \tau^{(\ve)}_{k }, \sigma^{(\ve)}_{k }], k\geq 1\right).
$$
We have equality of  $\sigma$-algebras $\sigma\left(X_\ve(t)-X_\ve(\tau^{(\ve)}_{k }), t\in [ \tau^{(\ve)}_{k }, \sigma^{(\ve)}_{k }], k\geq 1\right)
$ and 
\newline
$\sigma \left( w(t)-w( \tau^{(\ve)}_{k }),    t\in  [ \tau^{(\ve)}_{k }, \sigma^{(\ve)}_{k }], k\geq 1\right)$
 because  $X_\ve$ is a unique strong solution to \eqref{eq_main} (see Theorem 4.53 
\cite{EngelbertSchmidt}) and 
$X_\ve(\tau^{(\ve)}_{k })=\ve$.

Since
$\sigma \left( w(t)-w(\sigma^{(\ve)}_{k }), t\in  [ \sigma^{(\ve)}_{k }, \tau^{(\ve)}_{k+1}], k\geq 1\right)$
 is independent of 
 \newline
 $\sigma\left(X_\ve(t)-X_\ve(\tau^{(\ve)}_{k }), t\in [ \tau^{(\ve)}_{k }, \sigma^{(\ve)}_{k }], k\geq 1\right),$
 random variable $\bar G^+_\ve$ is independent of 
\newline
$\sigma(\bar X_\ve(t), \bar n_\ve(t), \bar w_\ve(t), t\geq 0)$.

By the strong Markov property of $X_\ve$ we have
$$
P(\bar G^+_\ve>n+1\  | \ \bar G^+_\ve>n)= P(X_\ve(z)\neq -\ve, z\in [\tau^{(\ve)}_{n+1}, \sigma^{(\ve)}_{n+1}] \ | \ X_\ve(z)\neq -\ve, z\leq  \tau^{(\ve)}_{n+1} )=
$$
$$
= P\Big( \inf\{ t\geq 0: \ X_\ve(t)=-\ve\}\geq\inf\{t\geq 0:\ X_\ve(t)=2\ve\} \  | \ X_\ve(0)=\ve \Big)=1-p_\ve^+.
$$
Lemma \ref{lem_equal} is proved.
 \end{proof}
 
 \begin{thm}\label{thm1.1}  {Let $X_\ve(0)=x>0.$}  Assume that $\ve^{-1} p_\ve^+\to \alpha^+$ as $\ve\to 0+.$
 Let $Exp(\alpha^+)$  be exponential random variable that is independent of $w.$
Then $(\tilde X_\ve(\cdot), \ve\tilde n_\ve(\cdot), w(\cdot),  \tilde \zeta_\ve )$ converges weakly  to
$(w^{refl, +}(\cdot), l^{+}(\cdot), w(\cdot), \zeta )$ in $D([0,\infty))^2\times C([0,\infty))\times \mbR$,
where $(w^{refl,+},  l^+)$ is a  solution of the Skorokhod problem with reflection into the positive half line (see \S \ref{Sect1}), $\zeta=\inf\{t\geq 0: \ l^+(t)\geq Exp(\alpha^+)\}$.

 {Moreover
\be\label{eqStop}
(\tilde X_\ve(\cdot\wedge \tilde \zeta_\ve), \ve\tilde n_\ve(\cdot\wedge \tilde \zeta_\ve),  {\tilde w_\ve}(\cdot\wedge \tilde \zeta_\ve), \tilde \zeta_\ve )\Rightarrow
(w^{refl, +}(\cdot\wedge \zeta), l^{+}(\cdot\wedge \zeta), w(\cdot\wedge \zeta), \zeta)
\ee
as $\ve\to 0+$  in $D([0,\infty))^2\times C([0,\infty))\times \mbR$.
}
 \end{thm}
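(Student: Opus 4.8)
The plan is to prove everything on a single probability space through an explicit coupling, and then read off weak convergence from almost sure convergence.

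First I would dispose of the first three coordinates, which are pathwise continuous functionals of the single Wiener path $w$ and do not involve $\tilde G_\ve^+$ at all. Writing $M_\ve(t)=-\min_{s\in[0,t]}((w(s)-2\ve)\wedge 0)$, the rounding estimate $|\ve[\ve^{-1}M_\ve(t)]-M_\ve(t)|\le\ve$ together with the elementary continuity of the running-minimum functional gives, pathwise and uniformly on compacts, $\ve\tilde n_\ve\to l^+$ and hence $\tilde X_\ve=w+\ve\tilde n_\ve\to w+l^+=w^{refl,+}$, where $(w^{refl,+},l^+)$ is the Skorokhod reflection of $w$ (formulas \eqref{eq2.1}--\eqref{eq2.2}). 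Since these limits are continuous, uniform-on-compacts convergence is exactly convergence in $D([0,\infty))$, and no probabilistic input is needed.

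Next I would set up the coupling for the fourth coordinate. From $P(\tilde G_\ve^+>n)=(1-p_\ve^+)^n$ and $\ve^{-1}p_\ve^+\to\alpha^+$ one checks $P(\ve\tilde G_\ve^+>t)=(1-p_\ve^+)^{[t/\ve]}\to e^{-\alpha^+t}$, so $\ve\tilde G_\ve^+\Rightarrow Exp(\alpha^+)$. To upgrade to almost sure convergence I would realize every $\tilde G_\ve^+$ from one uniform variable $U$ on $(0,1)$, independent of $w$, via the inverse-CDF formula $\tilde G_\ve^+=\lceil \log U/\log(1-p_\ve^+)\rceil$; this keeps $\tilde G_\ve^+$ geometric with parameter $p_\ve^+$ and independent of $w$, so the joint law of the quadruple is unchanged, while $\ve\tilde G_\ve^+\to -(\alpha^+)^{-1}\log U=:E$ almost surely, with $E$ an $Exp(\alpha^+)$ variable independent of $w$. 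Because the whole quadruple is a measurable functional of $(w,\tilde G_\ve^+)$, it suffices to prove almost sure convergence under this coupling. The crux is then the convergence of the stopping times. Using the lemma's identity $\tilde n_\ve(t)=k$ on $[\tilde\tau_k,\tilde\tau_{k+1})$ with $\tilde\tau_k=\inf\{t:w(t)=(2-k)\ve\}$, one gets $\tilde\zeta_\ve=\inf\{t:w(t)=(2-\tilde G_\ve^+)\ve\}$; likewise $l^+(t)\ge E$ iff $\min_{s\le t}w(s)\le -E$, so $\zeta=\inf\{t:w(t)=-E\}$. Thus both are first-passage times of the \emph{same frozen} path $w$ at the levels $(2-\tilde G_\ve^+)\ve\to -E$. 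First-passage times are in general discontinuous functionals, and this is the step I expect to be the main obstacle; what rescues the argument is precisely that the path is fixed and only the level moves. I would therefore use that, for fixed $w$, the map $a\mapsto\inf\{t:w(t)=a\}$ (for $a<w(0)$) is monotone, hence has at most countably many discontinuities, say $D_w$; since $E$ is independent of $w$ and has a density, $P(-E\in D_w)=0$, so almost surely this map is continuous at $-E$ and $\tilde\zeta_\ve\to\zeta$ almost surely.

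Finally I would assemble the pieces: the coupled quadruple converges almost surely in $D([0,\infty))^2\times C([0,\infty))\times\mbR$, so the original quadruple converges weakly to $(w^{refl,+},l^+,w,\zeta)$. For the stopped statement \eqref{eqStop} I would record the elementary fact that if $f_\ve\to f$ uniformly on compacts with $f$ continuous and $s_\ve\to s$, then $f_\ve(\cdot\wedge s_\ve)\to f(\cdot\wedge s)$ uniformly on compacts; applying this coordinatewise with $s_\ve=\tilde\zeta_\ve$ and $s=\zeta$ (all limits being continuous, so every $D$-convergence is uniform on compacts) yields \eqref{eqStop}.
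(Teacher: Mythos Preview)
Your proof is correct and follows the same overall scheme as the paper's: pathwise uniform convergence of $(\tilde X_\ve,\ve\tilde n_\ve,w)$ to $(w^{refl,+},l^+,w)$, a coupling so that $\ve\tilde G_\ve^+\to Exp(\alpha^+)$ almost surely while remaining independent of $w$, and then almost sure convergence of the stopping times. The two differences are worth noting. First, where the paper invokes the abstract Skorokhod representation theorem to produce the coupling, you build it explicitly via the quantile transform $\tilde G_\ve^+=\lceil\log U/\log(1-p_\ve^+)\rceil$ from a single uniform $U$ independent of $w$; this is a concrete instance of the same device and has the advantage that independence from $w$ is transparent. Second, and more substantively, for the convergence $\tilde\zeta_\ve\to\zeta$ the paper argues that for each fixed $c>0$ the time $\zeta_c=\inf\{t:l^+(t)\ge c\}$ is almost surely a point of increase of $l^+$ (a Brownian path property, via the strong Markov property and the fact that Brownian motion started at $0$ immediately enters $(-\infty,0)$), and then uses independence of $Exp(\alpha^+)$ and $w$ to transfer this to the random level. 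You instead rewrite both $\tilde\zeta_\ve$ and $\zeta$ as first-passage times of the \emph{same} Brownian path $w$ at the levels $(2-\tilde G_\ve^+)\ve$ and $-E$, observe that $a\mapsto\inf\{t:w(t)=a\}$ is monotone and hence has at most countably many discontinuities, and use only that $E$ has a density and is independent of $w$. Your argument is more elementary and would work for any continuous driving process, whereas the paper's argument is shorter once one accepts the Brownian fact but is specific to Brownian motion. Either route closes the proof; the final assembly of the stopped convergence \eqref{eqStop} is identical in spirit.
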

 \begin{proof}
It follows from Lemma \ref{lem_equal} and \eqref{eq2.1}, \eqref{eq2.2} that 
\be\label{eqUCF}
(\tilde X_\ve(\cdot), \ve\tilde n_\ve(\cdot), w(\cdot))\to (w^{refl, +}(\cdot), l^{+}(\cdot), w(\cdot)), \ve\to0+
\ee
uniformly on compact sets for almost all $\omega$.

 {  Since  $\ve^{-1}p_\ve\to \alpha, \ve\to0+,$ we have the weak convergence $\ve Geom(p_\ve^+)\Rightarrow Exp(\alpha^+)$ as $\ve\to 0+.$ By Skorokhod's representation theorem \cite[Theorem 6.7]{Billingsley}, there are copies of all processes given on the joint probability space (we will denote them by the same symbols) such that
\be\label{eqSkRep}
\ve\tilde G_\ve^+\to Exp(\alpha^+), \ve\to 0+ \ \mbox{almost surely},
\ee
 and $  \tilde G_\ve^+, Exp(\alpha^+)$
are independent of $w$.}

 {
It can be easily seen from the definition of $l^+$ that for any $c>0$ the instant $\zeta_c=\inf\{t\geq 0: \ l^+(t)\geq  {c}\}$
 is a point of increase of $l^+$ with probability 1, that is,
$$
P(\forall \delta>0: l^+(\zeta_c+\delta)>l^+(\zeta_c))=1.
$$
The independence of $Exp(\alpha{{^+}}) $
and $w$ yields that  $\zeta:=\inf\{t\geq 0: \ l^+(t)\geq Exp(\alpha^+)\}$ is also a point of increase of $l^+$ with probability 1. So, \eqref{eqUCF} and \eqref{eqSkRep} imply almost sure convergences
$$
\tilde \zeta_\ve=\inf\{t\geq 0:\ \ve\tilde n_\ve(t)\geq \ve\tilde G_\ve^+\}\to \inf\{t\geq 0: \ l^+(t)\geq Exp(\alpha^+)\}=\zeta,\ \ve\to0+,
$$  
and hence \eqref{eqStop}.
}
\end{proof}
\begin{cor}\label{cor1}
 Assume that $\ve^{-1} p_\ve^+\to \alpha^+$ as $\ve\to 0+.$
Then we have the weak convergence
$$
(\bar X_\ve(\cdot\wedge \bar \zeta_\ve), \ve\bar n_\ve(\cdot\wedge \bar \zeta_\ve),  {\bar w_\ve}(\cdot\wedge \bar \zeta_\ve), \bar \zeta_\ve )\Rightarrow
(w^{refl, +}(\cdot\wedge \zeta), l^{+}(\cdot\wedge \zeta), w(\cdot\wedge \zeta), \zeta)
$$
as $\ve\to 0+$  in $D([0,\infty))^2\times C([0,\infty))\times \mbR$,
where
$\zeta = \inf\{t\geq 0: \ l^+(t)\geq Exp(\alpha^+)\}$.
\end{cor}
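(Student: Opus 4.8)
The plan is to obtain Corollary \ref{cor1} directly from Lemma \ref{lem_equal} together with Theorem \ref{thm1.1}, exploiting that the stopped bar-quadruple is the image of the unstopped bar-quadruple under exactly the same deterministic transformation that produces the stopped tilde-quadruple from the unstopped tilde-quadruple. Because weak convergence is a statement about laws alone, and Lemma \ref{lem_equal} asserts equality of the laws of the two unstopped quadruples for every $\ve$, the two stopped families must converge, or fail to converge, together; so nothing beyond Theorem \ref{thm1.1} is genuinely at stake.

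Concretely, first I would introduce the map $\Phi$ on $D([0,\infty))^2\times C([0,\infty))\times\mbR$ defined by $\Phi(\phi_1,\phi_2,\phi_3,c)=(\phi_1(\cdot\wedge c),\phi_2(\cdot\wedge c),\phi_3(\cdot\wedge c),c)$, so that the stopped bar-quadruple is $\Phi$ applied to $(\bar X_\ve,\ve\bar n_\ve,\bar w_\ve,\bar\zeta_\ve)$, while the stopped tilde-quadruple of \eqref{eqStop} is $\Phi$ applied to $(\tilde X_\ve,\ve\tilde n_\ve,w,\tilde\zeta_\ve)$ (the scalar factor $\ve$ on the second coordinate being applied componentwise first, and the correspondence $\bar w_\ve\leftrightarrow w$ being the one built into Lemma \ref{lem_equal}). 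Lemma \ref{lem_equal} gives that these two argument quadruples have identical laws; since $\Phi$ is Borel measurable, their images under $\Phi$ have identical laws as well. Hence, for each fixed $\ve$, the stopped bar-quadruple and the stopped tilde-quadruple coincide in distribution. Theorem \ref{thm1.1}, equation \eqref{eqStop}, supplies the weak convergence of the latter to $(w^{refl,+}(\cdot\wedge\zeta),l^+(\cdot\wedge\zeta),w(\cdot\wedge\zeta),\zeta)$, and identical laws at every $\ve$ force the former to converge to the same limit, which is precisely the assertion of the corollary.

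The only step demanding attention — and the nearest thing to an obstacle in an otherwise immediate argument — is the Borel measurability of the path-stopping operation $(\phi,c)\mapsto\phi(\cdot\wedge c)$ on Skorokhod space $D([0,\infty))$ (on $C([0,\infty))$ it is even continuous). This is routine: the coordinate evaluations $\phi\mapsto\phi(t)$ generate the Borel $\sigma$-algebra, and $\phi(t\wedge c)$ is jointly measurable in $(\phi,c)$, whence $\Phi$ is measurable. I would stress that no continuity of $\Phi$ is needed, because equality in distribution is transported through any measurable map; thus the passage from Theorem \ref{thm1.1} to Corollary \ref{cor1} carries no further analytic content and requires no continuous-mapping-type estimates.
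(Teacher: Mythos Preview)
Your argument is correct and is essentially the paper's intended route: the paper states Corollary~\ref{cor1} with no proof, treating it as the immediate consequence of Lemma~\ref{lem_equal} (equality in law of the bar- and tilde-quadruples) together with \eqref{eqStop} of Theorem~\ref{thm1.1}. Your explicit use of the measurable stopping map $\Phi$ merely spells out why that passage is legitimate; nothing further is required.
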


\begin{thm}\label{thm_main1}
 Assume that $\ve^{-1} p_\ve^+\to \alpha^+$ as $\ve\to 0+.$
Then we have the weak convergence
$$
(X_\ve(\cdot\wedge \zeta_\ve),  w(\cdot\wedge \zeta_\ve), \zeta_\ve )\Rightarrow
(w^{refl, +}(\cdot\wedge \zeta), w(\cdot\wedge \zeta), \zeta),
$$
where
$\zeta_\ve =\inf\{ t\geq 0:\ \ X_{\ve}(t)=-\ve\},$ $\zeta=\inf\{t\geq 0: \ l^+(t)\geq Exp(\alpha^+)\}$.
\end{thm}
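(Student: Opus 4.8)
The plan is to derive Theorem~\ref{thm_main1} from Corollary~\ref{cor1} by showing that the random time change $A_\ve$ which links $X_\ve$ to the reduced process $\bar X_\ve$ becomes the identity in the limit. Set $R_\ve(t):=t-A_\ve(t)=\int_0^t\1_{s\in\cup_k[\tau^{(\ve)}_k,\sigma^{(\ve)}_k]}\,ds$, the total length of the ``in--band'' excursion intervals deleted by the time change. Directly from the definitions one has the pathwise relations $\zeta_\ve=\bar\zeta_\ve+R_\ve(\zeta_\ve)$, $\sup_{t\le\zeta_\ve}|A_\ve(t)-t|=R_\ve(\zeta_\ve)$, and $|X_\ve(t)-\bar X_\ve(A_\ve(t))|\le 3\ve$ for $t\le\zeta_\ve$ (on a deleted interval $X_\ve$ stays in $(-\ve,2\ve]$ while $\bar X_\ve\circ A_\ve$ is frozen at the post-jump value $2\ve$, and on a kept interval the two agree). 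Thus the whole statement reduces to the single estimate $R_\ve(\zeta_\ve)\to 0$ in probability.

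I would prove this estimate by excursion counting. Because $X_\ve$ can reach $-\ve$ only while it is inside the band, $\zeta_\ve$ lies in the last deleted interval, and writing $\theta^{(\ve)}_k:=\inf\{t\ge\tau^{(\ve)}_k:\ X_\ve(t)\in\{-\ve,2\ve\}\}$ for the exit time of $(-\ve,2\ve)$ started from $\ve$, one gets $R_\ve(\zeta_\ve)=\sum_{k=1}^{\bar G_\ve}(\theta^{(\ve)}_k-\tau^{(\ve)}_k)$. The crucial ingredient is a uniform bound on one excursion: applying It\^o's formula to $V(x)=x^2$ and using the standing sign condition $\sign(x)a_\ve(x)\ge 0$ gives $\tfrac12 V''(x)+a_\ve(x)V'(x)=1+2x\,a_\ve(x)\ge 1$, so $V(X_\ve(t))-t$ is a submartingale; optional stopping at $\theta^{(\ve)}_k$, together with $V\le 4\ve^2$ on $(-\ve,2\ve)$ and $V(\ve)=\ve^2$, yields $E[\theta^{(\ve)}_k-\tau^{(\ve)}_k\mid\cF_{\tau^{(\ve)}_k}]\le 3\ve^2$. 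Since $\{\bar G_\ve\ge k\}$ is $\cF_{\tau^{(\ve)}_k}$-measurable and $\bar G_\ve$ is geometric with parameter $p_\ve^+$ (the law identified in the proof of Lemma~\ref{lem_equal}), a Wald-type summation gives $E[R_\ve(\zeta_\ve)]\le 3\ve^2\,E[\bar G_\ve]=3\ve^2/p_\ve^+=3\ve\cdot(\ve/p_\ve^+)\to 0$, since $\ve^{-1}p_\ve^+\to\alpha^+>0$. I expect this to be the main obstacle: the point is that the sign condition forces the in-band sojourns to be short even when the drift $a_\ve$ is large, and this is exactly what makes the deleted time negligible on the macroscopic scale.

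It remains to assemble the convergence. The same quantity controls the driving noise: $w(t)-\bar w_\ve(A_\ve(t))=\int_0^t\1_{s\in\cup_k[\tau^{(\ve)}_k,\sigma^{(\ve)}_k]}\,dw(s)$ is a martingale with bracket $R_\ve(t)$, so Doob's inequality gives $\sup_{t\le\zeta_\ve}|w(t)-\bar w_\ve(A_\ve(t))|\to 0$ in $L^2$. Taking $A_\ve$ itself as the reparametrization in the Skorokhod $J_1$ metric, the three coordinatewise distances between $(X_\ve(\cdot\wedge\zeta_\ve),w(\cdot\wedge\zeta_\ve),\zeta_\ve)$ and $(\bar X_\ve(\cdot\wedge\bar\zeta_\ve),\bar w_\ve(\cdot\wedge\bar\zeta_\ve),\bar\zeta_\ve)$ are dominated by $3\ve$, $\sup_{t\le\zeta_\ve}|w-\bar w_\ve\circ A_\ve|$, and $R_\ve(\zeta_\ve)$, each tending to $0$ in probability. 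By Corollary~\ref{cor1} (discarding the local-time coordinate $\ve\bar n_\ve$ by the continuous projection) the second triple converges weakly to $(w^{refl,+}(\cdot\wedge\zeta),w(\cdot\wedge\zeta),\zeta)$, so a convergence-together (Slutsky) argument transfers this limit to the first triple, which is precisely the assertion of Theorem~\ref{thm_main1}.
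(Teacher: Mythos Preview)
Your overall scheme coincides with the paper's---both derive Theorem~\ref{thm_main1} from Corollary~\ref{cor1} by showing that the time deleted by $A_\ve$ is asymptotically negligible---but the key estimate is obtained by a genuinely different route. The paper bounds the occupation time $\int_0^T\1_{|X_\ve(s)|<2\ve}\,ds$ on a fixed window via the It\^o--Tanaka formula: the sign condition $\sign(x)a_\ve(x)\ge 0$ forces $|X_\ve|\ge B^{refl}$ for a reflected Brownian motion $B^{refl}$, which spends zero Lebesgue time at $0$; combined with a modulus-of-continuity lemma $\omega_{X_\ve}(\delta)\le 2\ve+2\omega_w(\delta)$, this gives the sup-norm closeness of $X_\ve(\cdot\wedge\zeta_\ve)$ and $\bar X_\ve(\cdot\wedge\bar\zeta_\ve)$ in probability. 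Your excursion-counting argument is more elementary and avoids Tanaka's formula and the comparison altogether: the Lyapunov bound $E[\theta^{(\ve)}_k-\tau^{(\ve)}_k\mid\cF_{\tau^{(\ve)}_k}]\le 3\ve^2$ from $V(x)=x^2$, summed over the geometric number $\bar G_\ve$ of in-band excursions, yields $E[R_\ve(\zeta_\ve)]\le 3\ve^2/p_\ve^+\to 0$ directly in $L^1$. The trade-off is that your bound needs $\ve/p_\ve^+$ to stay bounded, i.e.\ $\alpha^+>0$, whereas the paper's occupation-time argument is uniform in $\alpha^+$ and also covers the degenerate case $\alpha^+=0$ (where $\zeta=\infty$). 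One minor technical point: $A_\ve$ is constant on the deleted intervals and hence not a homeomorphism of $[0,\infty)$, so it is not a legitimate $J_1$ reparametrization; this is harmless because the limit is continuous, but the cleanest fix is simply to bound the sup norm $\sup_{t\le T}|X_\ve(t\wedge\zeta_\ve)-\bar X_\ve(t\wedge\bar\zeta_\ve)|$ directly, which is what the paper does.
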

\begin{proof} {
Assume that $\zeta_\ve\in[0,T].$ Then by construction of $\bar \zeta_\ve, \bar X_\ve, \bar w_\ve$ we have
$$
|\bar \zeta_\ve-\zeta_\ve|\leq\int_0^{T}\1_{|X_\ve(s)|<2\ve}ds;
$$
\be\label{eq461}
\sup_{t\in[0,T]}|\bar X_\ve(t\wedge \bar \zeta_\ve)-X_\ve(t\wedge \zeta_\ve)|\leq 3\ve +\omega_{X_\ve,[0,T]}\left(\int_0^{\zeta_\ve}\1_{|X_\ve(s)|<2\ve}ds\right).
\ee
$$
\sup_{t\in[0,T]}|\bar w_\ve(t\wedge \bar \zeta_\ve)-w(t\wedge \zeta_\ve)|\leq 
\omega_{w,[0,T]}\left(\int_0^{T}\1_{|X_\ve(s)|<2\ve}ds\right)+\sup_{t\in[0,T]}|\int_0^{t\wedge \zeta_\ve} \1_{s\in \cup_k[ \tau^{(\ve)}_{k }, \sigma^{(\ve)}_{k }]}dw(s)|;
$$
Note that
$$
E \sup_{t\in[0,T]}\left(\int_0^{t\wedge \zeta_\ve}\1_{s\in \cup_k[ \tau^{(\ve)}_{k }, \sigma^{(\ve)}_{k }]}dw(s)\right)^2
 \leq4 E \int_0^{T\wedge \zeta_\ve}\1_{s\in \cup_k[ \tau^{(\ve)}_{k }, \sigma^{(\ve)}_{k }]}ds\leq 
4E\int_0^{T}\1_{|X_\ve(s) |<2\ve}ds.
$$
\begin{lem}\label{lemModCont}
For any $\ve>0, T>0$ we have
\be\label{eq444}
\omega_{X_\ve,[0,T]}\left(\delta \right)\leq 2\ve+2\omega_{w,[0,T]}\left(\delta \right),
\ee
where $\omega_{f,[0,T]}\left(\delta\right):=\sup_{s,t\in [0,T],\ |s-t|<\delta}|f(s)-f(t)|$ is the modulus of continuity of $f.$
\end{lem}
 Let $s\leq t, s,t\in[0,T].$ If $X_\ve(z)\notin [-\ve,\ve], z\in[s,t]$, then 
\be\label{eq450}
|X_\ve(t)-X_\ve(s)|=|w(t)-w(s)|\leq \omega_{w, [0,T]}(t-s). \
\ee
 Suppose that $X_\ve(z)\in [-\ve,\ve]$ for some $ z\in[s,t]$. Assume, for example, that $X_\ve(s)>\ve, X_\ve(t)<-\ve$. 
 All other cases are considered similarly. Denote $\tau:=\inf\{z\geq s | X_\ve(z)=\ve\},$
 $\sigma:=\sup\{z\leq t | X_{\ve}(z)=-\ve\}.$ Then 
\be\label{eq456}
 |X_\ve(s)-X_\ve(t)|=|X_\ve(s)-X_\ve(\tau)+ X_\ve(\tau)-X_\ve(\sigma)+X_\ve(\sigma)-X_\ve(t)|=
\ee
$$
 =|X_\ve(s)-X_\ve(\tau)|+ |X_\ve(\tau)-X_\ve(\sigma)|+|X_\ve(\sigma)-X_\ve(t)|=
 $$
$$
 =|X_\ve(s)-X_\ve(\tau)|+2\ve+|X_\ve(\sigma)-X_\ve(s)|=|w(s)-w(\tau)|+2\ve+|w(\sigma)-w(t)|\leq
 2\omega_{w, [0,T]}(t-s)+2\ve.
$$
Formulas \eqref{eq450}, \eqref{eq456} imply \eqref{eq444}. Lemma \ref{lemModCont} is proved.
}

It follows from \eqref{eq461} and \eqref{eq444} that
$$
\sup_{t\in[0,T]}|\bar X_\ve(t\wedge \bar \zeta_\ve)-X_\ve(t\wedge \zeta_\ve)| \leq 5\ve + 2\omega_{w,[0,T]}\left(\int_0^{T}\1_{|X_\ve(s)|<2\ve}ds\right).
$$


The proof of the Theorem will follow from Corollary \ref{cor1} if we show that for any $T>0$
\be\label{eq_tanaka}
\int_0^{T}\1_{|X_\ve(s)|<2\ve}ds \tp 0, \ {\ve\to 0+}.
\ee
By Ito-Tanaka's formula we have
$$
|X_\ve(t)|=|x|+\int_0^t \sign (X_\ve(s)) a_\ve(X_\ve(s)) ds + B(t)+ l_\ve(t),
$$
where $B(t)=\int_0^t \sign (X_\ve(s)) d w(s)$ is a new Brownian motion, and $l_\ve$ is the local time of $X_\ve$ at $0.$ Notice that
$(|X_\ve(t)|, {l}_\ve(t)) $ is a solution of the Skorokhod reflection problem for the process $|x|+\int_0^t \sign (X_\ve(s)) a_\ve(X_\ve(s)) ds + B(t), \ t\geq 0.$
Since $\sign (x)a_\ve(x)\geq 0$, formulas \eqref{eq2.1}, \eqref{eq2.2} yield that $|X_\ve(t)|\geq B^{refl}(t):=|x|+B(t)-\min_{s\in [0,t]}((|x|+B(t))\wedge 0).$
Reflecting Brownian motion spends zero time at $0.$ This yields \eqref{eq_tanaka} and hence  completes the proof of the Theorem.
\end{proof}
Denote
$$
p_\ve^-:= P\Big( \inf\{ t\geq 0: \ X_\ve(t)=\ve\}<\inf\{t\geq 0:\ X_\ve(t)=-2\ve\} \  | \ X_\ve(0)=-\ve \Big).
$$
\begin{cor}
Assume that
\be \label{eq_4_1}
\lim_{\ve\to 0+}\ve^{-1} p_\ve^\pm=\alpha^\pm\in(0,\infty).
\ee
Then  $X_\ve$ converges in distribution to a Brownian motion with a hard membrane and parameters $\alpha^\pm.$
\end{cor}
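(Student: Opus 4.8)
The plan is to obtain the corollary by iterating Theorem \ref{thm_main1} across successive crossings of the membrane, gluing the resulting excursions together by means of the strong Markov property of $X_\ve$ and of the fact that Theorem \ref{thm_main1} provides convergence \emph{jointly} with the driving Wiener process $w$. I treat the case $X_\ve(0)=x>0$; the cases $x<0$ and $x=0$ (with an attached initial direction) are identical after relabelling. First I would record the mirror of Theorem \ref{thm_main1}: the substitution $x\mapsto -x$ (together with $a_\ve(\cdot)\mapsto -a_\ve(-\cdot)$) leaves the hypotheses $\mathrm{supp}\,a_\ve\subset[-\ve,\ve]$ and $\sign(x)a_\ve(x)\geq 0$ invariant, so for a solution started below zero the process reflected into the negative half-line and stopped at the first hitting of $+\ve$ converges to $w^{refl,-}$ stopped when its local time first reaches an independent exponential variable with parameter $\alpha^-$. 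The negligibility of the time spent in $[-2\ve,2\ve]$ (the Ito--Tanaka estimate \eqref{eq_tanaka}) together with the modulus-of-continuity bound \eqref{eq444} shows that the $\mp\ve$ displacement of the restart point is immaterial, so in every phase the limiting reflected motion starts from $0$.

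Next I would introduce the successive crossing times $\zeta_\ve^{(0)}=0$, $\zeta_\ve^{(1)}=\zeta_\ve=\inf\{t:\ X_\ve(t)=-\ve\}$, $\zeta_\ve^{(2)}=\inf\{t\geq\zeta_\ve^{(1)}:\ X_\ve(t)=\ve\}$, and so on, alternating the values $-\ve$ and $+\ve$; these are the instants at which $X_\ve$ crosses the membrane. The argument is an induction on $N$, the claim being that
$$
\bigl(X_\ve(\cdot\wedge\zeta_\ve^{(N)}),\, w(\cdot\wedge\zeta_\ve^{(N)}),\,\zeta_\ve^{(1)},\dots,\zeta_\ve^{(N)}\bigr)
\Rightarrow
\bigl(w^{hard}(\cdot\wedge\zeta^{(N)}),\, w(\cdot\wedge\zeta^{(N)}),\,\zeta^{(1)},\dots,\zeta^{(N)}\bigr),
$$
where $\zeta^{(k)}$ are the switching times of the hard-membrane process. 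The case $N=1$ is exactly Theorem \ref{thm_main1}. For the inductive step I would condition on $\mathcal F_{\zeta_\ve^{(N)}}$ and invoke the strong Markov property of $X_\ve$: the post-$\zeta_\ve^{(N)}$ increment $w(\zeta_\ve^{(N)}+\cdot)-w(\zeta_\ve^{(N)})$ is a fresh Wiener process independent of the past, so the next excursion, stopped at the following crossing, converges by the appropriately oriented version of Theorem \ref{thm_main1} to a reflected motion of the opposite orientation, driven by that fresh Wiener process and stopped when its local time first reaches an independent exponential level with the corresponding parameter $\alpha^+$ or $\alpha^-$. This fresh Wiener process is precisely the one driving the $(N+1)$-st phase of $w^{hard}$, and the independent exponential is precisely the next threshold $\xi_k^+$ or $\xi_k^-$, so the two pieces glue with the correct joint law. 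Since $\zeta^{(N+1)}$ is, exactly as in the proof of Theorem \ref{thm1.1}, a point of increase of the relevant local time, the stopped paths depend continuously on their ingredients and the convergence upgrades from the marginal restart into the required joint convergence.

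Finally I would pass from finitely many excursions to the entire trajectory. On a fixed horizon $[0,T]$ the hard-membrane process performs only finitely many switches almost surely: its two-sided local time at $0$ is finite on compacts, while the thresholds accumulate, $\sum_k(\xi_k^++\xi_k^-)\to\infty$ a.s., so $\zeta^{(N)}\uparrow\infty$. Correspondingly the number $N_\ve(T)$ of crossings before $T$ is tight and, by the case-$N$ convergence, $N_\ve(T)\Rightarrow N(T)$. Combining tightness of $N_\ve(T)$ with the claim for each fixed $N$ and the continuity of the limit $w^{hard}$ lets me discard the artificial stopping at $\zeta_\ve^{(N)}$ and conclude $X_\ve\Rightarrow w^{hard}$ in $C([0,\infty))$.

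The main obstacle is the gluing in the inductive step: one must verify that the strong-Markov restart, whose starting point $\mp\ve$ drifts to $0$ and whose switching is governed by a geometric variable only asymptotically exponential, combines with the already-established convergence of the earlier excursions into genuinely joint convergence carrying the correct independence in the limit. Making this rigorous amounts to realizing the hard-membrane process as an almost surely continuous functional of the driving Wiener path and the i.i.d. exponential thresholds, that is, to proving continuity of the concatenation-of-Skorokhod-reflections map at the limiting configuration, using that each switching instant is a point of increase of the local time so that no mass is lost at the gluing times.
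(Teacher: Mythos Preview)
The paper states this corollary without proof, treating it as an immediate consequence of Theorem~\ref{thm_main1} (and its mirror version for the negative half-line) iterated across successive crossings via the strong Markov property of $X_\ve$. Your proposal is exactly the natural fleshing-out of that implicit argument---induction on the number of crossings, joint convergence with the driving Wiener process to carry the gluing, and the observation that $\zeta^{(N)}\uparrow\infty$ to pass from finitely many excursions to the full trajectory---and is correct.
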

\begin{remk}
If $\{\alpha^+=0 \ and \ \alpha^->0\}$ or $\{\alpha^+<\infty \ and \ \alpha^-=\infty\},$ then $X_\ve\Rightarrow w^{refl,+}$ as $\ve\to 0+.$
\end{remk}
\section{Example}\label{Sect3}
In this Section we give sufficient conditions that ensure  convergence of a sequence $\{X_\ve\}$ to a Brownian motion with a hard membrane.
Here we assume  that $a_\ve(x)=L_\ve\ve^{-1}a(\ve^{-1}x),$   $\mathrm{supp} a\subset [-1,1],\ \sign(x) a(x)\geq 0. $

We need to verify condition \eqref{eq_4_1}.

It is well known \cite{GS, Knight, ItoMcKean} that probabilities in \eqref{eq_4_1} are of the form
\be\label{eq_p}
p_\ve^+=\frac{s(2\ve)-s(\ve)}{s(2\ve)-s(-\ve)},
\ \
p_\ve^-=\frac{s(-2\ve)-s(-\ve)}{s(-2\ve)-s(\ve)},
\ee
where $s(x)=s_\ve(x)=\int_0^x\exp(-2\int_0^y a_\ve(z)dz)dy$ is the scale function of the diffusion $X_\ve.$

Denote $A(x):=\int_0^xa(y)dy.$ Then
$$
s_\ve(x)=\int_0^x\exp(-2\int_0^y a_\ve(z)dz)dy= \int_0^x\exp(-2\int_0^y L_\ve\ve^{-1}a(\ve^{-1}z)dz)dy=
$$
$$
=\int_0^x\exp(-2\int_0^{\ve^{-1}y} L_\ve a(u)du)dy=\int_0^x\exp(-2L_\ve A(\ve^{-1}y))dy=
$$
$$
=\ve\int_0^{\ve^{-1}x}\exp(-2L_\ve A(y))dy.
$$
So
$$
p_\ve^+=\frac{\int_1^{2}\exp(-2L_\ve A(y))dy}{\int_{-1}^{2}\exp(-2L_\ve A(y))dy}.
$$
Since $a(x)=0, x>1,$ we have for $x>0:$
$$
A(x)=A(1)=\int_0^1a(y)dy=\int_0^\infty a(y)dy=:A_+.
$$

 So
\be\label{eq_265}
\ve^{-1} p_\ve^+=
 \frac{ \ve^{{-1}} \exp(-2L_\ve A_+) }{ \int_{-1}^{2}\exp(-2L_\ve A(y))dy}.
\ee
\begin{thm}\label{thm2.1}
Assume that

\be\label{eq_as_A}
2A(x)\sim c_{\pm}|x|^\lambda, x\to 0 \pm,
\ee
 where $c_\pm>0,$ $\lambda>0.$

Then
\be\label{eq_lim_pe}
\lim_{\ve\to 0+}\ve^{-1}p^+_\ve=\alpha_+>0
\ee
 is equivalent to
the following condition
\be\label{eq_270}
L_\ve=(2A_+)^{-1}\Big(
\ln(\ve^{-1})+\lambda^{-1}\ln\ln(\ve^{-1})-
\ee
$$
-  (\ln(\alpha_+) + \ln(\Gamma(1+\lambda^{-1}))+  \lambda^{-1}\ln(2A_+)
+\ln(c_+^{-1/\lambda}+c_-^{-1/\lambda}) )
\Big) +o(1), \ \ve\to 0+.
$$
\end{thm}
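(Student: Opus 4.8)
The plan is to reduce the whole statement to a Laplace asymptotic for the denominator in \eqref{eq_265}. Write $I_\ve:=\int_{-1}^{2}\exp(-2L_\ve A(y))\,dy$, so that by \eqref{eq_265} we have $\ve^{-1}p_\ve^+=\ve^{-1}e^{-2L_\ve A_+}/I_\ve$. Since $\sign(x)a(x)\ge 0$, the function $A(y)=\int_0^y a$ satisfies $A\ge 0$ everywhere with $A(0)=0$; it is nondecreasing on $[0,\infty)$ and nonincreasing on $(-\infty,0]$, and by \eqref{eq_as_A} with $c_\pm>0$ it is strictly positive for all $y\ne 0$ in a neighborhood of the origin. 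Because $L_\ve\to\infty$, the mass of $I_\ve$ concentrates at $y=0$, and the exponent behaves there like $c_\pm|y|^\lambda$; this is exactly the regime where Watson/Laplace asymptotics produce a power of $L_\ve$ together with a Gamma-factor.

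First I would carry out this Laplace estimate. Fix a small $\delta>0$ and split $I_\ve$ into the central part $\int_{-\delta}^{\delta}$ and the remainder. On $[-1,2]\setminus[-\delta,\delta]$ monotonicity of $A$ gives $A(y)\ge m_\delta:=\min(A(\delta),A(-\delta))>0$, so the remainder is at most $3e^{-2L_\ve m_\delta}$, which is exponentially small in $L_\ve$ and hence $o(L_\ve^{-1/\lambda})$. On the central part I use \eqref{eq_as_A}: given $\eta>0$, for $\delta$ small enough $(c_\pm-\eta)|y|^\lambda\le 2A(y)\le(c_\pm+\eta)|y|^\lambda$ on $(0,\delta]$ and $[-\delta,0)$ respectively, and the substitution $u=L_\ve c_+ y^\lambda$ turns $\int_0^\delta e^{-L_\ve c_+ y^\lambda}\,dy$ into $\tfrac1\lambda(L_\ve c_+)^{-1/\lambda}\int_0^{L_\ve c_+\delta^\lambda}e^{-u}u^{1/\lambda-1}\,du$, which converges to $\Gamma(1+\lambda^{-1})\,c_+^{-1/\lambda}L_\ve^{-1/\lambda}$ (using $\lambda^{-1}\Gamma(\lambda^{-1})=\Gamma(1+\lambda^{-1})$). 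Adding the symmetric contribution from $y<0$ and letting $\eta\to 0$ yields the key estimate
\be
I_\ve=\Gamma(1+\lambda^{-1})\bigl(c_+^{-1/\lambda}+c_-^{-1/\lambda}\bigr)\,L_\ve^{-1/\lambda}\,(1+o(1)),\qquad \ve\to 0+,
\ee
and I abbreviate $K:=\Gamma(1+\lambda^{-1})(c_+^{-1/\lambda}+c_-^{-1/\lambda})$.

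Combining this with \eqref{eq_265} and taking logarithms gives, unconditionally,
\be
\ln(\ve^{-1}p_\ve^+)=\ln(\ve^{-1})-2A_+L_\ve+\lambda^{-1}\ln L_\ve-\ln K+o(1).
\ee
Since $\alpha_+\in(0,\infty)$, condition \eqref{eq_lim_pe} is therefore equivalent to
\be
2A_+L_\ve=\ln(\ve^{-1})+\lambda^{-1}\ln L_\ve-\ln K-\ln\alpha_+ +o(1).\tag{$*$}
\ee
It remains only to eliminate the implicit $\ln L_\ve$. From the leading term of $(*)$ — equivalently, directly from \eqref{eq_270} — one has $L_\ve=(2A_+)^{-1}\ln(\ve^{-1})(1+o(1))$, so that $\ln L_\ve=\ln\ln(\ve^{-1})-\ln(2A_+)+o(1)$. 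Substituting this into $(*)$ and dividing by $2A_+$ reproduces \eqref{eq_270} precisely; conversely \eqref{eq_270} gives both the $L_\ve$-asymptotics needed for the bootstrap and, via $(*)$, condition \eqref{eq_lim_pe}. This establishes the equivalence in both directions.

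The main obstacle is the Laplace estimate: one must control the exponentially small tails and, more delicately, the error from replacing $2A(y)$ by $c_\pm|y|^\lambda$ uniformly enough to pin down the exact prefactor $K$, because it is this prefactor together with the factor $L_\ve^{-1/\lambda}$ (which generates the $\lambda^{-1}\ln\ln(\ve^{-1})$ correction) that produces all the lower-order terms in \eqref{eq_270}. The bootstrap for $\ln L_\ve$ is then routine, the only point to check being that the multiplicative $1+o(1)$ in $L_\ve$ becomes an additive $o(1)$ after taking logarithms, so that the $\lambda^{-1}\ln L_\ve$ term is determined to the required $o(1)$ accuracy.
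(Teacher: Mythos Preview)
Your proof is correct and follows essentially the same route as the paper: a Laplace/Watson asymptotic for $\int_{-1}^2 e^{-2L_\ve A(y)}\,dy$ (the paper simply cites Fedoryuk's saddle-point book, whereas you work out the tail bound and the substitution by hand), then taking logarithms of \eqref{eq_265} and bootstrapping $\ln L_\ve$ via $L_\ve=(2A_+)^{-1}\ln(\ve^{-1})(1+o(1))$. The only point the paper makes explicit that you pass over is that in the direction \eqref{eq_lim_pe}$\Rightarrow$\eqref{eq_270} one must first observe that $\ve^{-1}p_\ve^+\to\alpha_+<\infty$ forces $p_\ve^+\to 0$, and from \eqref{eq_265} this happens only if $L_\ve\to\infty$, which is what makes the Laplace estimate available.
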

\begin{proof}
 It can be seen that $p_\ve^+\to 0$ only if $L_\ve\to +\infty.$  The function $A$ attains a minimum at $0.$ So, for any $\delta>0$ formula \eqref{eq_as_A} yields (\cite{Pereval}, Ch.2, Lemma 1.3)
\be\label{eq_sim1}
\int_{-1}^{2}\exp(-2L_\ve A(y))dy\sim \int_{-\delta}^{\delta}\exp(-2L_\ve A(y))dy\sim
\ee
$$
\sim \int^{\delta}_0\exp(-L_\ve c_- y^\lambda) dy+\int^{\delta}_0\exp(-L_\ve c_+ y^\lambda) dy \sim \Gamma(1+\lambda^{-1})L_\ve^{-1/\lambda}(c_-^{-1/\lambda}+c_+^{-1/\lambda})
$$
as $\ve\to 0+.$

 Assume that \eqref{eq_lim_pe} is true. Then it follows from \eqref{eq_265} and \eqref{eq_sim1} that
\be\label{eq_298}
-\ln\alpha_+ +o(1)=-\ln\ve -2A_+L_\ve +  \lambda^{-1}\ln L_\ve -\ln\left( \Gamma(1+\lambda^{-1}) (c_-^{-1/\lambda}+c_+^{-1/\lambda})\right)
\ee
as $\ve\to 0+.$

Hence $-2A_+L_\ve\sim\ln\ve.$ So
$ L_\ve =(2A_+)^{-1}\ln(\ve^{-1})(1+f(\ve)),$  where $f(\ve)\to 0, \ \ve\to 0.$ Substituting this into \eqref{eq_298}, we obtain after cancellations
$$
-f(\ve)
\ln(\ve^{-1})+\lambda^{-1}(\ln\ln(\ve^{-1}) -\ln(2A_+)) -
$$
$$
-\ln\left( \Gamma(1+\lambda^{-1}) (c_-^{-1/\lambda}+c_+^{-1/\lambda})\right)=\ln\alpha_{ {+}}+o(1).
$$
Hence
$$
f(\ve)=\frac{
\lambda^{-1}(\ln\ln(\ve^{-1}) -\ln(2A_+)) -\ln\left( \Gamma(1+\lambda^{-1}) (c_-^{-1/\lambda}+c_+^{-1/\lambda})\right)-\ln\alpha_{ {+}}+o(1)}{\ln(\ve^{-1})}
$$
and we get \eqref{eq_270}.

The proof that   \eqref{eq_265} follows from \eqref{eq_270} is similar.
\end{proof}
If $\int_{-1}^1a(y)dy=0,$ then  $2A_+=\int_{-\infty}^\infty|a(y)|dy=:\|a\|_1.$
Next result follows from   Theorem \ref{thm2.1} and Corollary \ref{cor1}.
\begin{thm}\label{thm2.2}
Let $\int_{-1}^1 a(y)dy=0$ and $2A(x)\sim c_{\pm}|x|^\lambda, x\to 0 \pm,$ where $c_\pm>0,$ $\lambda>0.$

Assume that
$$
L_\ve=(\|a\|_1)^{-1}\Big(
\ln(\ve^{-1})+\lambda^{-1}\ln\ln(\ve^{-1}) {-}
$$
$$
 {-}\left(\ln(\alpha) + \ln(\Gamma(1+\lambda^{-1}))+ \lambda^{-1}\ln(\|a\|_1)
+\ln(c_+^{-1/\lambda}+c_-^{-1/\lambda})\right)
\Big) +o(1), \ \ve\to 0+,
$$
where $\alpha>0.$

 {Let $X_\ve(0)=x\neq 0, \ve>0$.}

Then  $\{X_\ve\}$ converges in distribution to the Brownian motion with a hard membrane, where $\alpha_+=\alpha_-=\alpha.$
\end{thm}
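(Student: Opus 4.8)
The plan is to reduce Theorem~\ref{thm2.2} to three ingredients already at hand: Theorem~\ref{thm2.1}, which pins down $\ve^{-1}p_\ve^+$; a symmetry argument transferring this to $\ve^{-1}p_\ve^-$; and the Corollary following Theorem~\ref{thm_main1}, which upgrades two-sided control of $\ve^{-1}p_\ve^\pm$ into weak convergence of $X_\ve$ to the hard membrane process. The only genuinely new computation is the symmetry step.

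First I would note that under the balance condition $\int_{-1}^1 a=0$ we have $2A_+=\|a\|_1$, so the displayed formula for $L_\ve$ in the statement is exactly \eqref{eq_270} with $2A_+$ written as $\|a\|_1$. Theorem~\ref{thm2.1} then applies on the positive side and gives $\ve^{-1}p_\ve^+\to\alpha$.

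Next, the crux is $\ve^{-1}p_\ve^-\to\alpha$. The balance condition forces $A(-1)=-\int_{-1}^0 a=\int_0^1 a=A(1)=A_+$, so that $A$ takes the common value $A_+$ on both rays $\{|y|\ge 1\}$ and attains its global minimum $0$ only at the origin. Repeating the computation that yielded \eqref{eq_265}, now for $p_\ve^-=\dfrac{s(-2\ve)-s(-\ve)}{s(-2\ve)-s(\ve)}$, I obtain
\[
\ve^{-1}p_\ve^-=\frac{\ve^{-1}\exp(-2L_\ve A_+)}{\int_{-2}^{1}\exp(-2L_\ve A(y))\,dy}.
\]
The numerator is identical to that of $\ve^{-1}p_\ve^+$ in \eqref{eq_265}. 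For the denominators, the Laplace-type concentration \eqref{eq_sim1} shows that both $\int_{-1}^{2}\exp(-2L_\ve A)$ and $\int_{-2}^{1}\exp(-2L_\ve A)$ are asymptotically equivalent to $\int_{-\delta}^{\delta}\exp(-2L_\ve A)$, since for $L_\ve\to\infty$ the mass concentrates at the unique minimum of $A$ at $0$ and the endpoints become irrelevant. Hence $\ve^{-1}p_\ve^-\sim\ve^{-1}p_\ve^+\to\alpha$.

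Finally, with $\ve^{-1}p_\ve^\pm\to\alpha$, condition \eqref{eq_4_1} holds with $\alpha^+=\alpha^-=\alpha$, and the Corollary following Theorem~\ref{thm_main1} (itself built on Corollary~\ref{cor1} together with the strong Markov property and the definition in \S\ref{Sect1}) delivers convergence of $X_\ve$ to the Brownian motion with a hard membrane of equal permeabilities. The hypothesis $x\neq 0$, rather than $x>0$, is absorbed by the reflection symmetry of the construction: for $x<0$ one runs the identical scheme with $+$ and $-$ interchanged. I expect the main obstacle to be only the asymptotic bookkeeping in the symmetry step---checking that the endpoint asymmetry of the two denominators washes out---because the hard probabilistic content, the gluing of successive reflected excursions via the strong Markov property, is already encapsulated in the invoked Corollary.
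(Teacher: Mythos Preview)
Your proposal is correct and follows the paper's own approach: the paper simply states that the result ``follows from Theorem~\ref{thm2.1} and Corollary~\ref{cor1}'' after noting $2A_+=\|a\|_1$. You have in fact filled in the one step the paper leaves implicit, namely the verification that $\ve^{-1}p_\ve^-\to\alpha$ via the Laplace-method symmetry argument (your observation that both denominators localize to $\int_{-\delta}^\delta\exp(-2L_\ve A)$ is exactly right, and $A(-1)=A(1)=A_+$ under the balance condition makes the numerators identical); you also correctly invoke the unlabeled Corollary after Theorem~\ref{thm_main1} (condition~\eqref{eq_4_1}) rather than Corollary~\ref{cor1} itself, which is arguably more precise than the paper's own citation.
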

\begin{remk}
{
The parameters $\alpha_+$ and $\alpha_-$ coincide for the limiting process despite $c_\pm$ being possibly different. The corresponding formula can also be obtained in the different way that uses only ``one-sided'' arguments.} Indeed, similarly to the proof of Theorems \ref{thm_main1}, \ref{thm2.1} it can be shown that if
\be\label{eq_beta}
L_\ve=(2A_+)^{-1}\Big(
\ln(\ve^{-1})+\lambda^{-1}\ln\ln(\ve^{-1}) {-}
\ee
$$
 {-}\left(\ln \beta  + \ln(\Gamma(1+\lambda^{-1}))+  \lambda^{-1}\ln(2A_+)\right) \Big) +o(1), \ \ve\to 0+,
$$
then
\be\label{eq_306}
\lim_{\ve\to 0+}\ve^{-1}P\left(\zeta^{(\ve)}_0<\zeta^{(\ve)}_{2\ve} \ | \ X_\ve(0)=\ve\right)= c_+^{1/\lambda}\beta,
\ee
where $\zeta_r^{(\ve)}=\inf\{ t\geq 0:\ \ X_{\ve}(t)=r\}.$ Moreover
\be\label{eq_312}
X_\ve(\cdot\wedge \zeta^{(\ve)}_0) \Rightarrow
 w^{refl, +}(\cdot\wedge \zeta( {c_+^{1/\lambda}}\beta)),
\ee
where $\zeta( {c_+^{1/\lambda}}\beta)= \inf\{t\geq 0: \ l^+(t)\geq Exp( {c_+^{1/\lambda}}\beta)\}.$

It can be proved (again similarly to the proof of Theorem \ref{thm2.1}) that
\be\label{eq_317}
\lim_{\ve\to 0+}  P\left(\zeta^{(\ve)}_{-2\ve}<\zeta^{(\ve)}_{ 2\ve}  \ | \ X_\ve(0)=0\right) =
\frac{c_+^{-1/\lambda}}{c_+^{-1/\lambda}+c_-^{-1/\lambda}}.
\ee
Hence, if  {
$c_+^{1/\lambda}\frac{c_+^{-1/\lambda}}{c_+^{-1/\lambda}+c_-^{-1/\lambda}}\beta=\alpha,$ i.e., $\beta=(c_+^{-1/\lambda}+c_-^{-1/\lambda})\alpha,$
then
  \eqref{eq_306}, \eqref{eq_312}, \eqref{eq_317},  and memorylessness of exponential distribution  imply  Theorem \ref{thm2.2}.
  }
\end{remk}
\begin{remk} 
Suppose that some conditions of Theorem \ref{thm2.2} are not satisfied. Let $ X_\ve(0)=x>0$. It can be seen that in each of the following cases $X_\ve\Rightarrow w^{refl, +} $ as $  \ve\to 0+$

\begin{itemize}

\item $L_\ve$ has a form \eqref{eq_beta} and either $A_+>A_-$ or   $  2A(x)\sim c_{\pm}|x|^{\lambda_\pm}, x\to 0 \pm$, where $c_\pm >0$ and $0<\lambda_+<\lambda_-.$

\item   $2A(x)\sim c_{+} x^{\lambda_+}, x\to 0 + $   and for some $\delta>0$
$$
L_\ve\geq (2A_+)^{-1} \left(
\ln(\ve^{-1})+(1+\delta)\lambda^{-1}\ln\ln(\ve^{-1})\right) +o(1).
$$

\end{itemize}

\end{remk}

 {
{\bf Acknowledgement}. The authors thank  anonymous referees for
careful reading and pointing out  authors' oversights.}

\end{document}